%
%
\documentclass[12pt,reqno]{amsart}
\usepackage[final]{graphicx}
\usepackage{amsfonts}
\usepackage{pdfsync}
\usepackage{amsmath}
\usepackage{amssymb}
\usepackage{amsthm}




\topmargin-.5in \textwidth6.6in \textheight9in \oddsidemargin0in
\evensidemargin0in

\usepackage{amsbsy}
\usepackage{enumerate}
\usepackage[T1]{fontenc}
\usepackage[utf8]{inputenc}
\usepackage{lmodern}
\usepackage{color}
\usepackage{comment}

\newtheorem{theorem}{Theorem}[section]
\newtheorem{lemma}[theorem]{Lemma}

\newtheorem{remark}[theorem]{Remark}

\newtheorem*{remarks}{Remarks}

\DeclareMathOperator{\arctanh}{arctanh}
\newcommand*\diff{\mathop{}\!\mathrm{d}}

\def\ii{\'\i}

\specialcomment{notes}{\begingroup\sffamily\color{blue}}{\endgroup}

\includecomment{comments}

\begin{document}

\title[Non-existence result for a radial Br\'ezis-Nirenberg problem]{\textbf{A non-existence result for a generalized radial Br\'ezis-Nirenberg problem}}

\author[Benguria]{Rafael D. Benguria$^1$}\thanks{The work of R.B.  has been supported by Fondecyt (Chile) Project \# 116--0586, and
 the Millenium Science Initiative of the Ministry of Economy, Development, and Tourism (Chile), grant {\it Nucleus for Cardiovascular Magnetic Resonance}.}
\author[Benguria]{Soledad Benguria$^2$}

\address{$^1$ Rafael~D.~Benguria, Instituto de F\ii sica, Pontificia Universidad Cat\'olica de Chile, Avda. Vicu\~na Mackenna 4860, Santiago, Chile}
\email{rbenguri@fis.puc.cl}
\address{$^2$ Soledad Benguria, Mathematics Department, University of Wisconsin - Madison, 480 Lincoln Dr., Madison, WI 53703, USA.}
\email{benguria@math.wisc.edu}


\smallskip

\begin{abstract}
We develop a new method for estimating the region of the spectral parameter of a generalized Br\'ezis--Nirenberg problem
for which there are no, non trivial, smooth solutions. 
This new method combines the standard Rellich--Pohozaev argument with a Hardy type inequality for bounded domains. 
The estimates we get are better than the usual estimates for low dimensions. 
\end{abstract}

\subjclass{Primary 35XX; Secondary 35B33; 35A24; 35J25; 35J60}

\keywords{Br\'ezis--Nirenberg Problem, Hyperbolic Space, Nonexistence of Solutions, Pohozaev Identity, Hardy Inequality}

\maketitle

\maketitle

\section{Introduction.}

As pointed by us in \cite{BeBe17}, virial theorems have played, for a long time, a key role in the localization of linear and nonlinear eigenvalues. 
In the spectral theory of Schr\"odinger Operators, the virial theorem has been widely used to prove  the absence of positive eigenvalues for various 
multiparticle quantum systems (see, e.g., \cite{We67,Si74,Ba75}). 
In 1983,  Br\'ezis and Nirenberg  \cite{BrNi83} considered the existence and nonexistence of solutions  of the nonlinear equation 
$$
-\Delta u = \lambda u + |u|^{q-1} u, 
$$
defined on a bounded, smooth domain of $\mathbb{R}^n$, $n >2$, with Dirichlet boundary conditions, where $q=(n+2)/(n-2)$ is the critical Sobolev exponent. 
In particular, they used a virial theorem, namely the Pohozaev identity \cite{Po65}, to prove the nonexistence of regular  solutions when the domain is star--shaped, 
for any $\lambda \le 0$, in any $n >2$. After the classical paper \cite{BrNi83} of Br\'ezis and Nirenberg, many people have considered extensions of this problem in different settings. In 
particular, the Br\'ezis--Nirenberg (BN) problem has been studied on bounded, smooth, domains of the hyperbolic space $\mathbb{H}^n$ (see, e.g., \cite{St02,BaKa12,GaSa14,Be16}), 
where one replaces the Laplacian by the Laplace--Beltrami operator in $\mathbb{H}^n$. Stapelkamp \cite{St02} proved the analog of the above mentioned nonexistence result of 
Br\'ezis--Nirenberg in $\mathbb{H}^n$. Namely she proved that there are no regular solutions of the BN problem for bounded, smooth,  star--shaped domains in $\mathbb{H}^n$ ($n>2$), 
if $\lambda \le n(n-2)/4$. 

The purpose of this manuscript is to develop a new method for estimating the region of the spectral parameter of a generalized Br\'ezis--Nirenberg problem, 
for which there are no, non trivial, smooth solutions. 
This new method combines the standard Rellich--Pohozaev argument with a Hardy type inequality for bounded domains. 
The estimates we get are better than the usual estimates for low dimensions. 

\bigskip

Here we consider a generalized radial Br\'ezis-Nirenberg problem, which is given through the following 
boundary value problem. Given $R>0$, we are interested in estimating the region of the spectral parameter 
parameter $\lambda$ for which there are no non trivial smooth (more precisely $u \in C^2[0,R]$) solutions of
\begin{equation}\label{eq:BNG}\begin{cases}
-u''(x) - (n-1)\dfrac{a'(x)}{a(x)}u'(x) = \lambda u(x)+ |u(x)|^{q-1}u, \\
u'(0) = u(R) = 0, \\
\end{cases}\end{equation}
where $q= (n+2)/(n-2)$ is the critical Sobolev constant, $n>2,$ and $a \in C^3[0,R]$ satisfies

\bigskip

\begin{enumerate}[i)]\label{def:a}
\item{$a(0) = 0;$}
\item{$a'(x)> 0$ for all $x\in (0,R)$; and}
\item{there exists $\omega \ge 0$ such that $a''(x) \ge \omega a(x)$ for all $x \in (0,R).$ }
\end{enumerate}

\bigskip
\bigskip
\noindent
The standard procedure to determine the region of the parameter $\lambda$ for which there are no solutions to 
(\ref{eq:BNG}) is to write the equation in terms of a {\it conformal} second order operator and then use the 
Rellich--Pohozaev technique \cite{Re40,Po65}. See, e.g., \cite{St02,Be16} for the determination of the 
range of the parameter $\lambda$ for which there are no solutions of the equivalent of (\ref{eq:BNG}) 
in the case of $\mathbb{H}^n$, i.e., when $a(x)=\sinh(x)$. Applying this procedure we can prove the following. 

\bigskip
\bigskip

\begin{theorem}\label{thm1}
Problem (\ref{eq:BNG}) has no non--trivial solution if 
\begin{equation}
\lambda \le \mu^*(n,R) \equiv  \frac{n-2}{4} \, \inf_{0<x<R}\left\{ (n-1)\frac{a''}{a} + \frac{a'''}{a'}\right\}.
\label{eq:stap}
\end{equation}
\end{theorem}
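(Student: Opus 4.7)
The plan is to run a Rellich--Pohozaev argument tailored to the warped radial operator on the left--hand side of (\ref{eq:BNG}), using a multiplier of the form $a^{n-1}\bigl(h(x)\,u'+k(x)\,u\bigr)$ with $h,k$ chosen so that, after integration by parts, the contributions involving $(u')^2$ and $|u|^{q+1}$ cancel identically. The possibility of such a simultaneous cancellation is a direct reflection of $q=(n+2)/(n-2)$ being the critical Sobolev exponent.

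First I would multiply (\ref{eq:BNG}) by $a^{n-1}h\,u'$ and integrate over $[0,R]$. The boundary contributions at $x=0$ vanish because $a(0)=0$ and $u'(0)=0$, while those at $x=R$ vanish because $u(R)=0$, leaving only the term $\frac12\,a(R)^{n-1}h(R)\,u'(R)^2$. The outcome is a Pohozaev-type identity with integrands proportional to $A\,u'^2$, $B\,u^2$, and $B\,|u|^{q+1}$, where $A=a^{n-1}h'-(n-1)\,a^{n-2}a'\,h$ and $B=(a^{n-1}h)'$. Independently, I would multiply (\ref{eq:BNG}) by $a^{n-1}k\,u$ and integrate, obtaining an energy-type identity with integrands $a^{n-1}k\,u'^2$, $(a^{n-1}k')'\,u^2$, $a^{n-1}k\,u^2$, and $a^{n-1}k\,|u|^{q+1}$.

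Summing the two identities and demanding that the coefficients of $u'^2$ and $|u|^{q+1}$ both vanish produces the algebraic constraint $(q+1)A+2B=0$, which, once the critical value of $q$ is inserted, reduces to $h'/h=a'/a$. I would therefore take $h=a$ and, correspondingly, $k=-\frac{n-2}{2}a'$. A direct computation of the remaining $u^2$-coefficient, using $(a^{n-1}k')'=-\frac{n-2}{2}(a^{n-1}a'')'$, then yields the master identity
\begin{equation*}
\frac{1}{2}\,a(R)^{n}\,u'(R)^2 \;=\; \int_0^R a^{n-1}a'\Bigl[\lambda-\frac{n-2}{4}\Bigl((n-1)\frac{a''}{a}+\frac{a'''}{a'}\Bigr)\Bigr]u^2\,\diff x.
\end{equation*}
If $\lambda\le\mu^*(n,R)$ the bracket on the right is pointwise non-positive while $a^{n-1}a'\ge 0$, so the right side is non-positive and the left side non-negative; both must therefore vanish. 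In particular $u'(R)=0$, which together with $u(R)=0$ and Cauchy uniqueness for the ODE on $[\delta,R]$ (where the coefficients are smooth) forces $u\equiv 0$ on $(0,R]$, and continuity at the origin completes the conclusion.

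The main obstacle is the algebraic step: two independent cancellation conditions must be met using essentially one degree of freedom, and the system closes only because $q$ is exactly the critical Sobolev exponent; for any other $q$ the constraint $(q+1)A+2B=0$ becomes incompatible with the definitions of $A$ and $B$. A secondary delicate point is the bookkeeping of boundary contributions at $x=0$, where $a'/a$ is singular but $a^{n-1}(0)=0$, together with the $C^2$ bounds on $u$, is precisely what is needed to neutralize every boundary term there.
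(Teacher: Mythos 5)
Your argument is correct, and it reaches the conclusion of Theorem \ref{thm1} by a genuinely different, and more economical, route than the paper. The paper first spends all of Section 2 recasting the operator as a radial conformal Laplacian (determining $p$ and $r(\theta)$, substituting $u=p^{1-n/2}v$, computing the potential $V$), and only then runs the Rellich--Pohozaev argument in the new variable with the multiplier $r\,\dot v$, arriving at the virial identity (\ref{eq:SA28}). You instead run Pohozaev directly in the original coordinate, pairing the warped dilation multiplier $a^{n}u'$ with the energy multiplier proportional to $a^{n-1}a'u$; the criticality of $q$ closes the system exactly as you say, forcing $h'/h=a'/a$ and hence $h=a$. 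I have checked your master identity by direct computation and it is correct; in fact it is precisely (\ref{eq:SA28}) pulled back through the paper's change of variables, since $u^2a^{n-1}a'\,\diff x = v^2p^3Br^n\,\diff r$ and $a(R)^n\,u'(R)^2$ corresponds to $R^n\,\dot v(R)^2$. What your route buys is the elimination of the entire conformal-transformation apparatus of Section 2; what the paper's route buys is contact with the classical stereographic-projection treatment of the hyperbolic case. Two small remarks. First, with the sign conventions you describe (summing the two identities), the energy multiplier comes out as $k=+\tfrac{n-2}{2}a'$ rather than $-\tfrac{n-2}{2}a'$; this only affects whether one adds or subtracts the second identity and does not change your final formula, which is right. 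Second, your endgame is actually more careful than the paper's: the paper simply asserts that the right side of (\ref{eq:SA28}) is ``positive,'' whereas you correctly observe that when $\lambda=\mu^*$ the bracket may vanish on a set of positive measure, so one must first extract $u'(R)=0$ from the identity and then invoke backward uniqueness for the ODE on $[\delta,R]$ to conclude $u\equiv 0$.
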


\bigskip

\begin{remarks}
i) In the case of geodesic balls in $\mathbb{H}^n$, $n \ge 2$, which corresponds to having $a(x)=\sinh(x)$ in (\ref{eq:BNG}) 
we recover the result of Theorem 1, ii) of \cite{St02}. In that case there are no solutions of (\ref{eq:BNG}) if 
$$
\lambda \le \mu^*(n,R) = \frac{n(n-2)}{4}.
$$
ii) If one restricts to the case of positive solutions, one can prove non existence when $\lambda \le 1+ \pi^2/(16 \arctanh^2 R)$ for
$n=3$ (see, \cite{St02} Theorem 2). See also Theorem 1.1 in \cite{Be16} for the analogous result in the hyperbolic case for $2 \le n \le 4$.
iii) Also, if we restrict to the case of positive solutions, by a standard argument one does not have solutions if $\lambda \ge \lambda_1(n,R)$, 
where $\lambda_1(n,R)$ is the first eigenvalue of the operator, 
$$
-\frac{d^2}{dx^2} - (n-1)\dfrac{a'(x)}{a(x)} \, \frac{d}{dx}, 
$$
on $[0,R]$, with boundary conditions  $u'(0) = u(R) = 0$. 
\end{remarks}

\bigskip
\bigskip
As pointed above, the purpose of this manuscript is to develop a different approach to prove non existence of solutions. In fact we develop an scheme that 
combines the use of the Rellich--Pohozaev technique together with the use of Hardy type inequalities on bounded domains. Using this new scheme we 
can prove the following.

\bigskip
\bigskip

\begin{theorem}\label{thm2}
Problem (\ref{eq:BNG}) has no non--trivial solution if 
\begin{equation}
\lambda \le \lambda^*(n,R) \equiv \frac{n(n-1)}{4} \, C,
\label{eq:new}
\end{equation}
where 
\begin{equation}\label{eq:C}
C = \min\left\{ \frac{D+2\omega}{2(n+2)}, \frac{D}{4}\right\},
\end{equation}
and 
\begin{equation}\label{eq:T}
D = \inf_{0<x<R}\left\{ (2n-3)\frac{a''}{a} + \frac{a'''}{a'}\right\}.
\end{equation}

\end{theorem}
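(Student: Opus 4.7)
The strategy combines a Rellich--Pohozaev identity with a Hardy-type inequality on the bounded interval $(0,R)$. I would begin by rewriting (\ref{eq:BNG}) in divergence form $-(a^{n-1}u')' = a^{n-1}(\lambda u + |u|^{q-1}u)$. Multiplying by $u$ and integrating by parts using $u'(0)=u(R)=0$ gives the energy identity
\begin{equation*}
\int_0^R a^{n-1}(u')^2\, dx = \lambda \int_0^R a^{n-1} u^2\, dx + \int_0^R a^{n-1}|u|^{q+1}\, dx.
\end{equation*}
The Rellich--Pohozaev identity is then obtained by multiplying the divergence-form equation by $\phi(x)\,u'(x)$, for a multiplier $\phi$ mimicking the radial vector field $r\,\partial_r$ (the natural candidate being $\phi = a/a'$, or a weighted analogue thereof). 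Integration by parts, again exploiting $u'(0)=0$ and $u(R)=0$, produces a relation balancing the boundary term $u'(R)^2$ against weighted integrals of $(u')^2$, $u^2$, and $|u|^{q+1}$, with weights built from $a,a',a''$ and derivatives of $\phi$.

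For the critical Sobolev exponent $q=(n+2)/(n-2)$, I would next form the specific linear combination of Pohozaev and energy identities analogous to the classical Br\'ezis--Nirenberg choice. In the flat setting ($a(x)=x$) this combination cancels the $|u|^{q+1}$ integral exactly. For general $a$ the $|u|^{q+1}$ term does not vanish, but the residual has a sign that can be controlled by the convexity-type hypothesis $a'' \ge \omega\,a$. The combined identity expresses $\lambda \int_0^R a^{n-1}u^2\,dx$, up to a positive prefactor, as the sum of the boundary term $u'(R)^2$ (with positive coefficient), a weighted integral of $(u')^2$, and a manageable $|u|^{q+1}$ remainder.

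The new ingredient is a Hardy-type inequality of the form $\int_0^R a^{n-1}(u')^2\,dx \ge \int_0^R a^{n-1} W(x)\, u^2\,dx$, to be derived via the ground-state representation: writing $u=\psi v$ for a positive test function $\psi$ and integrating by parts yields
\begin{equation*}
\int_0^R a^{n-1}(u')^2\, dx = \int_0^R a^{n-1}\frac{L\psi}{\psi} u^2\, dx + \int_0^R a^{n-1}\psi^2 (v')^2\, dx + \left[a^{n-1}\frac{\psi'}{\psi} u^2\right]_0^R,
\end{equation*}
where $L\psi = -\psi'' - (n-1)(a'/a)\psi'$ is the radial part of the operator. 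A careful choice of $\psi$ (a suitable combination of powers of $a$ and $a'$) should produce the potential $W=L\psi/\psi$ containing the expression $(2n-3)a''/a + a'''/a'$ that defines $D$; invoking $a'' \ge \omega a$ then upgrades this to an estimate involving $\omega$. The two entries in $C=\min\{(D+2\omega)/(2(n+2)),\, D/4\}$ correspond to two admissible versions of the inequality (one using the $\omega$-bound, one not), each yielding an independent threshold for $\lambda$. Boundary terms vanish at $x=0$ because $a(0)=0$ and $n>2$, and at $x=R$ because $u(R)=0$ (for admissible $\psi$).

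Combining the Pohozaev--energy identity with the Hardy inequality should produce an inequality schematically of the form $(\lambda^*(n,R)-\lambda)\int_0^R a^{n-1}u^2\,dx \ge c_1\,u'(R)^2 + (\text{nonnegative terms})$ with $c_1 \ge 0$. When $\lambda \le \lambda^*$, every term must vanish, forcing $u'(R)=0$; together with $u(R)=0$ and uniqueness for the second-order ODE, this forces $u \equiv 0$. The main obstacle will be the delicate bookkeeping: choosing the Pohozaev multiplier $\phi$ and the Hardy test function $\psi$ so that all constants align to give exactly $\lambda^*=n(n-1)C/4$, and in particular controlling the residual $|u|^{q+1}$ term (which, unlike the flat case, does not vanish identically for general $a$) via the hypothesis $a'' \ge \omega a$. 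It is this last step that ultimately determines both pieces of the minimum defining $C$.
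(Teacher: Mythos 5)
Your overall architecture (energy identity, Pohozaev multiplier, Hardy-type inequality) matches the paper's announced strategy, but the plan leaves open precisely the steps that carry the quantitative content, and in two places it heads in a direction that would not work. First, the Pohozaev multiplier: the correct choice is $\phi\, u'$ with $\phi = G/G'$, where $G(x)=\int_0^x a^{n-1}(s)\,ds$ (equivalently, multiply the original equation by $u'G$). With this choice $(a^{n-1}\phi)' = G' = a^{n-1}$, so for the critical exponent the $|u|^{q+1}$ term cancels \emph{exactly} against the energy identity for every admissible $a$; there is no ``residual $|u|^{q+1}$ term'' to control, and the hypothesis $a''\ge\omega a$ is not used for that purpose. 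With your candidate $\phi=a/a'$ the nonlinear term genuinely survives, and you give no argument for why it would have a favorable sign. The resulting identity is
\begin{equation*}
\frac{\lambda}{n}\int_0^R u^2 a^{n-1}\,\diff x \;=\; \frac{u'(R)^2G(R)}{2}+(n-1)\int_0^R (u')^2\, S\,\diff x,
\qquad S=\frac{Ga'}{a}-\frac{a^{n-1}}{n},
\end{equation*}
and note that the weight on $(u')^2$ is $S$, not $a^{n-1}$.

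Second, the Hardy step. Your ground-state representation yields an inequality $\int a^{n-1}(u')^2\ge\int a^{n-1}(L\psi/\psi)u^2$ whose gradient side carries the weight $a^{n-1}$; this does not match the weight $S$ delivered by the Pohozaev identity, and you would still have to bridge the two. The paper's mechanism is more elementary and split differently: (i) Cauchy--Schwarz applied to $\int u^2G'=-2\int u u'G$ gives $\int_0^R u^2a^{n-1}\,\diff x<4\int_0^R G^2(u')^2/G'\,\diff x$ --- this is the only ``Hardy'' input, and it involves neither $D$ nor $\omega$; and (ii) a separate pointwise comparison lemma, $S(x)\ge C\,G^2(x)/G'(x)$, proved by showing that $f=SG'-CG^2$ satisfies $f(0)=0$ and $f'\ge0$ via an iterated monotonicity argument that uses $a''\ge\omega a$, $Ga'\ge a^n/n$, and the definition of $D$. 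It is step (ii), not the Hardy inequality, that produces $D$ and $\omega$, and the two constraints $C\le D/4$ and $C\le(D+2\omega)/(2(n+2))$ arise simultaneously from a single differential inequality --- not, as you suggest, from two alternative versions of the Hardy inequality. As written, your plan does not contain a route to the constant $C$, which is the substance of the theorem.
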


The rest of the manuscript is organized as follows. In section 2 we express the problem (\ref{eq:BNG}) 
in terms of an appropriate operator reminiscent of the conformal laplacian. Once we have that, we apply 
the standard Rellich--Pohozaev technique \cite{Re40, Po65} to prove Theorem \ref{thm1}. In Section 3 we set
forward our new approach. We start with the problem (\ref{eq:BNG}) stated as such. This,  in the $\mathbb{H}^n$ case,  
would correspond to studying radial solutions of the  BN problem for geodesic balls, in terms of geodesic coordinates
centered at the center of the balls. In that setting we use a combination of the Rellich--Pohozaev technique \cite{Re40, Po65}
together with a Hardy type inequality to prove Theorem \ref{thm2}. Finally, in Section 4 we illustrate and compare the bounds obtained 
in \ref{thm1} and \ref{thm2} through a particular example. The bound obtained through our new technique is better than the standard 
bound for a whole region of the parameter space $(n,R)$, in particular for all $R$ when $n \le 4$.

\section{Non-existence of solutions, via the Pohozaev virial identity, using the conformal laplacian.}

A key step towards proving the lower bound $\lambda > n(n-2)/4$ for the existence of solutions to the Br\'ezis--Nirenberg 
problem on a bounded domain in the Hyperbolic space $\mathbb{H}^n$ is to use the so called ``stereographic'' projection. 
By using the stereographic projection one writes the Laplace--Beltrami $\Delta_{\mathbb{H}^n}$ in terms of the conformal laplacian
in Euclidean coordinates. In fact, one can write, 
$$
\Delta_{\mathbb{H}^n} u = p^{-n} {\rm div} \left(p^{n-2} \, \nabla u \right), 
$$
where $p=2/(1+|x|^2)$. 
Here we  mimic what is done in $\mathbb{H}^n$ for our generalized operator
\begin{equation}
\frac{d^2}{d\theta^2} + (n-1) \frac{a'(\theta)}{a(\theta)} \frac{d}{d\theta} 
\label{eq:SA1}
\end{equation}
and write this as the radial part of a conformal laplacian 
\begin{equation}
p^{-n} {\rm div} \left(p^{n-2} \, \nabla ( \cdot ) \right)
\label{eq:SA2}
\end{equation}
written in euclidean coordinates, for some appropriate function $p$. Given the function $a(\theta)$ that defines our generalized operator (\ref{eq:SA1}), 
our first goal is to determine the conformal factor $p(r)$ in our case.
In order to do that we need to make the change of variables 
$\theta \to r(\theta)$, with $r= \vert x \vert$ in such a way that 
\begin{equation}
H(u) \equiv u'' + (n-1) \frac{a'(\theta)}{a(\theta)} u' = p^{-n} {\rm div} \left(p^{n-2} \, \nabla u \right) \equiv L(u). 
\label{eq:SA3}
\end{equation}
Since here we have radial symmetry, 
\begin{equation}
L(u) = p^{-n} r^{1-n} \frac{d}{dr} \left(r^{n-1} p^{n-2} \frac{du}{dr} \right).
\label{eq:SA4}
\end{equation}
Recall that in radial  coordinates, 
$$
{\rm div} \vec F = \nabla \cdot \vec F = \frac{1}{r^{n-1}} \frac{d}{dr}\left(r^{n-1} F_r \right).
$$
If we denote by $\dot u= du/dr$, we can write $L(u)$ as
\begin{equation}
L(u) = \frac{1}{p^2} \left[ \ddot u + \left(\frac{(n-1}{r} + (n-2) \frac{\dot p}{p} \right) \dot u \right]. 
\label{eq:SA5}
\end{equation}
On the other hand, If we denote by $u'=du/d\theta$, and use the chain rule we have
$$
u' = \dot u \frac{dr}{d\theta},
$$
and
$$
u'' = \ddot u \left(\frac{dr}{d\theta} \right)^2+  \dot u \,  \frac{d^2 r}{d \theta^2}.
$$
Hence, 
\begin{equation}
H(u) = \ddot u  \left(\frac{dr}{d\theta}\right)^2 + \left(\frac{d^2 r}{d \theta^2}+ (n-1) \frac{a'(\theta)}{a(\theta)}  \frac{dr}{d\theta}     \right) \dot u.
\label{eq:SA6}
\end{equation}
Replacing $H$, given by (\ref{eq:SA6}), and $L$,  given by (\ref{eq:SA5}) in (\ref{eq:SA3}), and comparing coefficients, we conclude at once that
\begin{equation}
\left( \dfrac{d r}{d \theta}\right) = \dfrac{1}{p},
\label{eq:SA7}
\end{equation}
and also that
\begin{equation}
\frac{d^2 r}{d \theta^2}+ (n-1) \frac{a'(\theta)}{a(\theta)}  \frac{dr}{d\theta}  = \frac{1}{p^2} \left(\frac{n-1}{r}+ (n-2) \frac{\dot p}{p} \right).
\label{eq:SA8}
\end{equation}
Differentiating (\ref{eq:SA7}) with respect to $\theta$ and again using the chain rule we have 
\begin{equation}
\frac{d^2 r}{d \theta^2}= - \frac{\dot p}{p^2} \, \frac{dr}{d\theta}= - \frac{\dot p}{p^3}.
\label{eq:SA9}
\end{equation}
Using  (\ref{eq:SA7}) and (\ref{eq:SA9}) in (\ref{eq:SA8}) we conclude, 
\begin{equation}
\frac{a'}{a} = \frac{1}{p \, r} + \frac{\dot p}{p^2} \equiv B(r). 
\label{eq:SA10}
\end{equation}
For the change of variables $\theta \to r(\theta)$  to be well defined we need $p>0$. From (\ref{eq:SA10}) and (\ref{eq:SA7}) is simple to obtain 
$r(\theta)$ and $p(\theta)$. In fact, $\dot p = p' \, d \theta/dr= p' \, p$. Since we also have $(1/p) = r'$, from (\ref{eq:SA10}) we get, 
\begin{equation}
\frac{a'}{a} = \frac{r'}{r} + \frac{p'}{p},
\label{eq:SA11}
\end{equation}
which can be immediately integrated to yield, 
\begin{equation}
a=p \, r, 
\label{eq:SA12}
\end{equation} 
(here we have chosen an irrelevant  integrating constant to be $1$). However, $p=1/r'$. So, from (\ref{eq:SA12}) we have 
$$
a(\theta) = \frac{r}{r'},
$$
which can be integrated to yield, 
$$
r(\theta) = \exp \int_{\theta_0}^{\theta} (1/a(s)) \, ds.
$$

\bigskip
\bigskip

\begin{proof} [Proof of Theorem \ref{thm1}]
\noindent
Once we have done the transformation from our original variable $\theta$ to $r$, via the relation with the conformal laplacian, we are ready to apply the standard 
Rellich--Pohozaev \cite{Re40,Po65} argument. This is done with the purpose of determining the region of $\lambda$ for which there are no non trivial solutions of the BN problem. 
For $n \ge 3$ the equation we consider is 
\begin{equation}
-L(u) = -  p^{-n} r^{1-n} \frac{d}{dr} \left(r^{n-1} p^{n-2} \frac{du}{dr} \right) = \lambda u + |u|^{4/(n-2)}\, u. 
\label{eq:SA13}
\end{equation}
We now introduce the change of the dependent variable $u \to v$ given by
\begin{equation}
u = p^{1-(n/2)} \, v.
\label{eq:SA14}
\end{equation}
After some long and  straightforward computations one can rewrite (\ref{eq:SA13}) as
\begin{equation}
- \ddot v - \frac{n-1}{r} \, \dot v + V \, v = \lambda p^2 \, v + |v|^{4/(n-2)} \, v, 
\label{eq:SA15}
\end{equation}
where the {\it potential} $V$ is given by
\begin{equation}
V= (n-2) \, \left[\frac{1}{2} \frac{\ddot p}{p} + \frac{1}{4} (n-4) \left(\frac{\dot p}{p} \right)^2 + \frac{1}{2} (n-1)  \frac{\dot p}{r \, p} \right].
\label{eq:SA16}
\end{equation}
Using (\ref{eq:SA10}) the potential $V$ can be expressed in terms of $B$ as, 
\begin{equation}
V= (n-2) \, \left[\frac{n}{4} p^2 B^2 + \frac{1}{2} \, p \dot B - \frac{(n-2)}{4 r^2}\right].
\label{eq:SA17}
\end{equation}
If we multiply (\ref{eq:SA15}) by $v$ we obtain, 
\begin{equation}
-\frac{1}{r^{n-1}}\frac{d}{dr} \left(r^{n-1} v  \, \dot v \right) + {\dot v}^2  + V\, v^2= \lambda \, v^2 + |v|^{4/(n-2)} \, v^2.
\label{eq:SA17.1}
\end{equation}
Now we proceed with the Rellich--Pohozaev argument applied to the equation (\ref{eq:SA15}). Multiply (\ref{eq:SA15})
by $r \, \dot v$ (here we are using that $r \, d/dr$ is the generator of radial dilations).
Among other quantities we need the identity, 
\begin{equation}
r \, \dot v \, V \, v = \frac{1}{r^{n-1}} \frac{d}{dr} \left(r^n \frac{v^2}{2} \, V\right) - \frac{n}{2} v^2 V - r  \frac{v^2}{2} \dot{V}.
\label{eq:SA18}
\end{equation}
Using (\ref{eq:SA17}) and the fact that
\begin{equation}
\frac{\dot p}{p}+\frac{1}{r} = p \, B, 
\label{eq:SA18.1}
\end{equation}
after some simple computations we conclude that 
\begin{equation}
\frac{1}{2} r \dot V + V = \frac{(n-2)}{4} r \, B \, p^3  \, T,
\label{eq:SA19}
\end{equation}
where $T$ is given, in terms of the function $B$, by
\begin{equation}
 T \equiv n \, B^2 + (n+1) \frac{\dot B}{p} + \frac{\dot B}{B} \frac{1}{p^2 \, r} + \frac{\ddot B}{B \, p^2}.
\label{eq:SA20}
\end{equation}
This particular expression involving the potential $V$ as well as the definition of $T$ will prove to be useful later.

\bigskip
\noindent
Moreover we  need the identity, 
\begin{equation} 
r \, \dot v \, \ddot v + (n-1) {\dot v}^2 = \frac{(n-2)}{2}  {\dot v}^2 + \frac{1}{r^{n-1}} \frac{d}{dr} \left( \frac{1}{2} r^n {\dot v}^2 \right).
\label{eq:SA21}
\end{equation}
And the last identity we need to have is the following, 
\begin{eqnarray} 
r \, \dot v \, \left(\lambda \, v \, p^2 + |v|^{4/(n-2)} \, v\right)  &=
 \frac{1}{r^{n-1}} \frac{d}{dr} \left( r^n \left(\frac{\lambda}{2} p^2 \, {v}^2  + \frac{(n-2)}{2n} |v|^{4/(n-2)} \, v^2\right)\right) \nonumber \\
 & - \lambda \frac{v^2}{2} n \, p^2 - \lambda \, r \, v^2 p \, \dot p - \frac{(n-2)}{2} |v|^{4/(n-2)} \, v^2.
 \label{eq:SA22}
\end{eqnarray}
With all these identities at hand we can recapitulate. Multiplying (\ref{eq:SA15}) by $r \dot v$ and using (\ref{eq:SA18}), (\ref{eq:SA21}), and (\ref{eq:SA22})
we get, 
\begin{eqnarray}
\frac{1}{r^{n-1}} \frac{d}{dr} \left( r^n \left(\frac{1}{2} v^2 V- \frac{1}{2}{\dot v}^2 - \frac{\lambda}{2} p^2 \, {v}^2  - \frac{(n-2)}{2n} |v|^{4/(n-2)} \, v^2 \right)\right)\nonumber \\
=  \frac{(n-2)}{2}  {\dot v}^2  + \frac{n}{2} v^2 V + r  \frac{v^2}{2} \dot{V}  - \lambda \frac{v^2}{2} n \, p^2 - \lambda \, r \, v^2 p \, \dot p - \frac{(n-2)}{2} |v|^{4/(n-2)} \, v^2.
 \label{eq:SA23}
\end{eqnarray}
Multiplying (\ref{eq:SA17.1}) by $(n-2)/2$ and adding (\ref{eq:SA23}) to the result we obtain, 
\begin{eqnarray}
\frac{1}{r^{n-1}} \frac{d}{dr} \left( r^n \left(\frac{1}{2} v^2 V- \frac{1}{2}{\dot v}^2 - \frac{(n-2)}{2} \frac{v \, \dot v}{r} - \frac{\lambda}{2} p^2 \, {v}^2  - \frac{(n-2)}{2n} |v|^{4/(n-2)} \, v^2 \right)\right)\nonumber \\
=   v^2 \, \left(V + \frac{1}{2}  r \dot V \right)   - \lambda \, v^2 \, p^2 \, r \left(\frac{\dot p}{p} + \frac{1}{r} \right).
 \label{eq:SA24}
\end{eqnarray}
Finally, multiplying (\ref{eq:SA24}) by $r^{n-1}$ and integrating the result from $0$ to $R$, we obtain the following {\it virial} identity, 
\begin{equation}
\frac{1}{2} R^n \, {\dot v (R)}^2 = \int_0^R  v^2 \, p^3 \, B \left[\lambda - \frac{(n-2)}{4} \, T \right] r^n \, dr.
\label{eq:SA25}
\end{equation}
To obtain  (\ref{eq:SA25}) we used the boundary conditions on $v$, in particular that $v(R)=0$. We also used (\ref{eq:SA18.1}) and (\ref{eq:SA19}) 
in order to express the right side of (\ref{eq:SA25}) in terms of $B$ and $T$.

\bigskip
\noindent
Using the chain rule and the definition of $p$ we have that
$$
\dot B = B' \, p,
$$
and, 
$$
\ddot B = B'' \, p^2 + B' \, \dot p. 
$$
Replacing these two expressions in the definition  (\ref{eq:SA20}) of $T$ we get,
\begin{equation}
T = n \, B^2 + (n+1) \, B' + \frac{B''}{B} + \frac{B'}{B} \left(\frac{\dot p}{p^2} + \frac{1}{r \, p} \right) = n \, B^2 + (n+1) \, B' + \frac{B''}{B} + B',
\label{eq:SA26}
\end{equation}
where we used (\ref{eq:SA18.1}) to obtain the second equality. 
Using the fact that $B=a'/a$, after some algebra starting from (\ref{eq:SA26}), we finally conclude that
\begin{equation}
T = (n-1) \frac{a''}{a} + \frac{a'''}{a'}. 
\label{eq:SA27}
\end{equation}
Inserting this expression for $T$ back in the {\it virial} identity (\ref{eq:SA25})
we have
\begin{equation}
\frac{1}{2} R^n \, {\dot v (R)}^2 = \int_0^R  v^2 \, p^3 \, B \left[\lambda - \frac{(n-2)}{4} \, \left((n-1) \frac{a''}{a} + \frac{a'''}{a'} \right)  \right] r^n \, dr.
\label{eq:SA28}
\end{equation}
Since the right side of (\ref{eq:SA28}) is positive, we conclude that if 
\begin{equation}
\lambda  \le  \mu^*(n,R) \equiv \frac{(n-2)}{4} \inf_{r \in [0,R]}  \left((n-1) \frac{a''}{a} + \frac{a'''}{a'} \right), 
\label{eq:SA29}
\end{equation}
then there is no solution of (\ref{eq:BNG}). This concludes the proof of our Theorem \ref{thm1}.
\end{proof}

\bigskip

\section{Non-existence of solutions using the Rellich--Pohozaev argument and a Hardy type inequality}

In this section we use a combination of the Rellich--Pohozaev technique together with a Hardy type inequality for bounded intervals in order to prove
Theorem \ref{thm2}. Hardy type inequalities, among other things, play an important role in the analysis of Partial Differential Equations
(see., e.g., \cite{BrMa97, Da99,OpKu90}, and references therein).   We begin with the following Lemma.

\bigskip
\begin{lemma}\label{lem:boundlambda}

Let $u \in C^2[0,R]$ be a solution of (\ref{eq:BNG}), and let $G(x) = \displaystyle\int_0^x a^{n-1}(s)\, ds$, and 
$S(x) = \dfrac{G(x)a'(x)}{a(x)} - \dfrac{a(x)^{n-1}}{n}.$ Then, 

\begin{equation}\label{eq:L2}
\lambda \ge \frac{n(n-1)}{4} \left(\frac{\displaystyle \int_0^R {u'}^2(x) S(x)\, \diff x}{\displaystyle \int_0^R \frac{G(x)^2 u'(x)^2}{G'(x)}\, \diff x} \right). 
\end{equation}

\end{lemma}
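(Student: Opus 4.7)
The plan is to combine a Pohozaev-type virial identity (obtained by testing (\ref{eq:BNG}) against the multiplier $G(x)\,u'(x)$) with the ``energy'' identity (obtained by testing against $a^{n-1}u$), and then to invoke a weighted Hardy inequality on $[0,R]$ to pass from an integral of $u^2$ to an integral of $(u')^2$.

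First, rewrite (\ref{eq:BNG}) in divergence form as $-(a^{n-1}u')' = a^{n-1}(\lambda u + |u|^{q-1}u)$. Testing with $u$ and integrating by parts (all endpoint contributions vanish by $u(R)=0$ and $a(0)=0$) yields the energy identity
\[
\int_0^R a^{n-1}(u')^2\,dx = \lambda\int_0^R a^{n-1}u^2\,dx + \int_0^R a^{n-1}|u|^{q+1}\,dx.
\]
Next, multiply (\ref{eq:BNG}) by $G(x)u'(x)$ and integrate over $[0,R]$. Each of the resulting integrals is handled by one integration by parts; using $G'=a^{n-1}$, $G(0)=0$, and $u(R)=0$, the only surviving endpoint term is $-\tfrac{1}{2}G(R)(u'(R))^2$. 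Forming the combination $(\text{Pohozaev identity}) + \tfrac{n-2}{2n}(\text{energy identity})$ cancels the nonlinear integral exactly (this is the classical Pohozaev cancellation: the coefficient $(n-2)/(2n)$ is precisely $1/(q+1)$ at the critical exponent). After regrouping the $(u')^2$-coefficient using $S(x) = Ga'/a - a^{n-1}/n$, the combination collapses to
\[
\frac{\lambda}{n}\int_0^R a^{n-1}u^2\,dx = (n-1)\int_0^R S(x)\,(u'(x))^2\,dx + \frac{G(R)\,(u'(R))^2}{2}.
\]
Discarding the nonnegative boundary term yields $\lambda\int_0^R a^{n-1}u^2\,dx \ge n(n-1)\int_0^R S\,(u')^2\,dx$.

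The Hardy step begins with $\int_0^R a^{n-1}u^2\,dx = \int_0^R G'(x)\,u(x)^2\,dx$; a single integration by parts (both boundary terms vanish by $G(0)=0$ and $u(R)=0$) gives $\int_0^R a^{n-1}u^2\,dx = -2\int_0^R G\,u\,u'\,dx$. The Cauchy--Schwarz split $G\,u\,u' = (u\sqrt{G'})\cdot(G u'/\sqrt{G'})$ yields
\[
\int_0^R a^{n-1}u^2\,dx \le 4\int_0^R \frac{G(x)^2\,(u'(x))^2}{G'(x)}\,dx,
\]
the advertised weighted Hardy inequality. Inserting this upper bound into the denominator of the Pohozaev inequality (using that both integrals are strictly positive for a nontrivial solution, and noting that the bound is informative in the regime $\lambda>0$ of interest for the non-existence application) produces (\ref{eq:L2}).

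The main technical obstacle I anticipate is the careful bookkeeping inside the Pohozaev step: one must track several boundary contributions, verify that $(n-2)/(2n)=1/(q+1)$ is precisely the weight that kills the nonlinear integral, confirm that the coefficient of $\lambda\int a^{n-1}u^2$ collapses to $1/n$, and check that the residual $(u')^2$-weight reorganizes exactly into $(n-1)S$ with $S$ as defined in the statement. Once this algebra is in place, the remainder of the argument is a one-line Cauchy--Schwarz plus a final substitution.
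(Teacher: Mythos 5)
Your proposal reproduces the paper's proof essentially verbatim: the same energy identity from the multiplier $a^{n-1}u$, the same Pohozaev identity from the multiplier $Gu'$, the same cancellation of the nonlinear term via $1/(q+1)=(n-2)/(2n)$ leading to $\tfrac{\lambda}{n}\int_0^R a^{n-1}u^2\,dx = \tfrac{1}{2}G(R)u'(R)^2+(n-1)\int_0^R S\,(u')^2\,dx$, and the same integration-by-parts plus Cauchy--Schwarz Hardy step. The one point you assert rather than prove is the nonnegativity of the numerator $\int_0^R S\,(u')^2\,dx$, which is needed to replace the denominator $\int_0^R a^{n-1}u^2\,dx$ by the larger Hardy bound; the paper supplies this by writing $S=m/a$ with $m=Ga'-a^n/n$, noting $m(0)=0$ and $m'=Ga''\ge 0$ by hypothesis (iii), so $S\ge 0$ on $(0,R)$.
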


\begin{proof}

\bigskip
\noindent
Suppose $u \in C^2[0,R]$ is a solution of (\ref{eq:BNG}). Multiplying (\ref{eq:BNG}) by $u \, a^{n-1}$ and integrating we obtain, after integrating the first term by parts, 

\begin{equation}\label{eq:P1}
\int_0^R {u'}^2 a^{n-1}\, \diff x = \lambda \int_0^R u^2 a^{n-1}\, \diff x + \int_0^R |u|^{q+1} a^{n-1}\, \diff x. 
\end{equation}
On the other hand, multiplying equation (\ref{eq:BNG}) by $u'G,$ where $G(x) = \displaystyle\int_0^x a^{n-1}(s)\, ds$ we obtain 
\begin{equation}\label{eq:P2}
-\int_0^R \left( \frac{{u'}^2}{2}\right)'G\, \diff x - (n-1) \int_0^R {u'}^2 \frac{Ga'}{a}\, \diff x = \lambda \int_0^R \left( \frac{u^2}{2}\right)' G\, \diff x + \int_0^R \left(\frac{|u|^{q+1}}{q+1}\right)' G\, \diff x.
\end{equation}
After integrating by parts, and since $G'(x) = a^{n-1}(x) $ and $G(0) = 0,$ equation (\ref{eq:P2}) reads
\begin{equation}\label{eq:P22}
-\frac{u'(R)^2G(R)}{2}+\int_0^R {u'}^2 \frac{a^{n-1}}{2}\, \diff x - (n-1) \int_0^R {u'}^2 \frac{Ga'}{a}\, \diff x = -\lambda \int_0^R u^2 \frac{a^{n-1}}{2}\, \diff x - \int_0^R \frac{|u|^{q+1}}{q+1}a^{n-1}\, \diff x. 
\end{equation}
Now, solving for the term in $|u|^{q+1}$ in equation (\ref{eq:P1}) and substituting this term in equation (\ref{eq:P22}) we obtain
\begin{equation}\label{eq:P}
\int_0^R {u'}^2 \left( \frac{a^{n-1}}{2} + \frac{a^{n-1}}{p+1}- (n-1) \frac{Ga'}{a}\right)\, \diff x + \int_0^R u^2 a^{n-1} \left( \frac{\lambda}{2} - \frac{\lambda}{q+1}\right) \diff x = \frac{u'(R)^2G(R)}{2}.
\end{equation} 
However, since $1/2 + 1/(q+1) = (n-1)/n$ and $1/2 - 1/(q+1) = 1/n$, we can write
\begin{equation}\label{eq:P3}
\frac{\lambda}{n} \int_0^R u^2 a^{n-1}\, \diff x = \frac{u'(R)^2G(R)}{2} +(n-1) \int_0^R {u'}^2 \left( \frac{Ga'}{a} - \frac{a^{n-1}}{n}\right)\, \diff x.
\end{equation}
By hypothesis, we have that if $x>0$ then $a(x)>0.$ Therefore, 
\begin{equation}\label{eq:La}
\lambda \ge \frac{ n(n-1)\displaystyle\int_0^R {u'}^2\left( \frac{Ga'}{a} - \frac{a^{n-1}}{n}\right)\, \diff x}{\displaystyle\int_0^R u^2 a^{n-1}\, \diff x}.
\end{equation} 
Now let $S(x) = Ga'/a - a^{n-1}/n,$ the coefficient of ${u'}^2$ of the integral in the numerator. Then $L\ge 0$ if $x>0.$ In fact, let $m(x) = Ga' - a^n/n$. Since $G(0) = 0,$ one has that $m(0) = 0.$ Also,  since $G'(x) = a^{n-1}(x),$ we have that $m'(x) = Ga''$. In particular, since by hypothesis $a'' \ge \omega a \ge 0$, we have that $m'(x) \ge 0.$ It follows that $m\ge 0$ for all $x\in(0,R),$ and since $a$ is positive in this range, $L \ge 0.$ 

\bigskip 


\bigskip
\noindent
We will now use a Hardy type inequality to rewrite the  integral in the denominator in terms of ${u'}^2.$ Integrating by parts, we can write
\begin{equation}\label{eq:H1}
\int_0^R u^2 G'\, \diff x =-2\int_0^R u \, u' G\, \diff x = -2 \int_0^R \left( u a^\frac{n-1}{2}\right)\left( Gu' a^\frac{1-n}{2}\right)\, \diff x.
\end{equation} 
By Cauchy--Schwarz, it follows that 
\begin{equation}\label{eq:CS1}
\left( \int_0^R u^2 a^{n-1}\, \diff x\right)^2 < 4 \int_0^R u^2 a^{n-1}\, \diff x \int_0^R \frac{G^2{u'}^2}{a^{n-1}}\, \diff x.
\end{equation}
\noindent 
where the conditions on $u$ require the above inequality to be strict. Thus, 
 \begin{equation}\label{eq:CS2}
\int_0^R u^2 a^{n-1}\, \diff x < 4 \int_0^R \frac{G^2{u'}^2}{a^{n-1}}\, \diff x. 
\end{equation}
Hence, using that $a^{n-1}(x) = G'(x)$, it follows from equations (\ref{eq:La}) and (\ref{eq:CS2}) that 
\begin{equation}
\lambda > \frac{n(n-1)}{4} \left(\frac{\displaystyle \int_0^R u'(x)^2 S(x)\, \diff x}{\displaystyle \int_0^R \frac{G(x)^2 u'(x)^2}{G'(x)}\, \diff x} \right),
\end{equation} 
which proves the lemma.
\end{proof}

\begin{lemma}\label{lem:bounds}
$S(x) \ge C\dfrac{G^2(x)}{G'(x)},$ where $ C$ is given by equation  (\ref{eq:C}). 
\end{lemma}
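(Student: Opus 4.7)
The plan is to recast the target inequality as $P(x) := a^{n-2}(x)\,m(x) - C\, G^2(x) \ge 0$, where I introduce the auxiliary function $m(x) := G(x) a'(x) - a^n(x)/n$. Indeed, since $G'(x) = a^{n-1}(x)$, multiplying the claim by $G'>0$ turns the identity $S(x)\,G'(x) = a^{n-2}(x)\,m(x)$ into the requirement $P \ge 0$. A preliminary observation that I will use repeatedly is that $m'(x) = G(x) a''(x) \ge 0$ by hypothesis (iii), together with $m(0)=0$, so $m \ge 0$ on $[0,R]$ (a fact already invoked inside the proof of Lemma \ref{lem:boundlambda}). Since $P(0)=0$, it suffices to show $P'(x) \ge 0$ on $(0,R)$.

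Before attacking $P'$ directly, I will establish the auxiliary inequality
\[
G(x)\, a^2(x) \;\ge\; \frac{n+2}{n}\, J(x), \qquad J(x) := \int_0^x a^{n+1}(s)\, \diff s,
\]
by setting $K(x) := G(x)\, a^2(x) - \tfrac{n+2}{n}\, J(x)$, noting that $K(0)=0$, and computing $K'(x) = 2\, a(x)\, m(x)$ (after cancellation using $G' = a^{n-1}$), which is nonnegative by the previous paragraph. This bridge is exactly what will couple the two candidate expressions appearing in the $\min$ defining $C$.

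A direct differentiation gives $P'(x) = a^{n-3}(x)\, \tilde Q(x)$ with $\tilde Q(0)=0$; since $a>0$ on $(0,R)$ it is enough to prove $\tilde Q \ge 0$, which I will obtain from $\tilde Q(x) = \int_0^x \tilde Q'(s)\, \diff s$. A second differentiation yields
\[
\tilde Q'(x) = \tfrac{2}{n}\, a^n a'' + G\, a\, a''' + (2n-3)\, G\, a'\, a'' - 2C\, a^{n+1} - 4C\, G\, a\, a'.
\]
I now apply both structural assumptions. First, the combination $G\, a\, a''' + (2n-3)\, G\, a'\, a'' = G\, a\, a'\bigl[\tfrac{a'''}{a'} + (2n-3)\tfrac{a''}{a}\bigr] \ge D\, G\, a\, a'$ by the definition of $D$ in (\ref{eq:T}); second, $\tfrac{2}{n}\, a^n a'' \ge \tfrac{2\omega}{n}\, a^{n+1}$ by hypothesis (iii). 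Integrating the resulting lower bound on $\tilde Q'$ and using the identity $\int_0^x G\, a\, a'\, \diff s = \tfrac{1}{2}\bigl(G(x) a^2(x) - J(x)\bigr)$, a one-line integration by parts with $G' = a^{n-1}$, produces
\[
\tilde Q(x) \;\ge\; \frac{4\omega - nD}{2n}\, J(x) + \frac{D - 4C}{2}\, G(x)\, a^2(x).
\]

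To close, I invoke the auxiliary inequality from the second paragraph. The constraint $C \le D/4$ built into the $\min$ guarantees $D - 4C \ge 0$, so substituting $G\, a^2 \ge (n+2)\,J/n$ gives $\tilde Q(x) \ge (J(x)/n)\bigl[\,2\omega + D - 2(n+2)\,C\,\bigr]$, and the bracket is nonnegative by the other constraint $C \le (D + 2\omega)/(2(n+2))$. Hence $\tilde Q \ge 0$, so $P' \ge 0$ and $P \ge 0$, which is the lemma. The main obstacle I expect is uncovering the auxiliary inequality $G\, a^2 \ge \tfrac{n+2}{n}\, J$: it is the single step that both exploits the definition of $m$ via $K' = 2am$ and delivers precisely the constants $(n+2)$ and $2\omega$ that appear in $C$. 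Without that bridge, a crude pointwise estimate on $\tilde Q'$ only yields $C \le \min\{D/4,\, \omega/n\}$, which is strictly worse than $(D + 2\omega)/(2(n+2))$ whenever $D > 4\omega/n$.
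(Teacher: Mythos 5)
Your proof is correct, and its skeleton coincides with the paper's own: your $P$ is the paper's $f = S\,G' - C\,G^2$, your $\tilde Q$ is the paper's $g$, and your formula for $\tilde Q'$ is exactly the paper's expression (\ref{eq:g1}), lower-bounded using the definition of $D$ and the hypothesis $a'' \ge \omega a$ in the same way. The two arguments part ways only at the final step. The paper stays pointwise: from $m \ge 0$ it gets $Ga' \ge a^n/n$, substitutes this into the term $(D-4C)\,G\,a\,a'$ (legitimate once $C \le D/4$), and concludes $g'(x) \ge \frac{a^{n+1}}{n}\left(D - 4C + 2\omega - 2Cn\right) \ge 0$ pointwise, whence $g \ge 0$. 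You instead integrate the lower bound on $\tilde Q'$, via the identity $\int_0^x G\,a\,a'\,\diff s = \frac{1}{2}\left(G a^2 - J\right)$, and then invoke the auxiliary inequality $G a^2 \ge \frac{n+2}{n}\,J$; note that this auxiliary inequality is precisely the integrated avatar of the paper's pointwise bridge, since your $K' = 2am$ is $2a$ times the quantity $m$ that the paper bounds below by zero. Both routes deliver exactly the same constant $C = \min\left\{\frac{D+2\omega}{2(n+2)}, \frac{D}{4}\right\}$; the paper's is shorter, while yours shows that the needed conclusion $\tilde Q \ge 0$ can be reached without establishing pointwise nonnegativity of $\tilde Q'$. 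One small correction to your closing remark: the ``crude'' bound $C \le \min\{D/4,\, \omega/n\}$ is what one gets with no bridge at all, but the pointwise bridge $Ga' \ge a^n/n$ used by the paper already recovers the full constant, so the integrated detour, though perfectly valid, is not actually forced on you.
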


\begin{proof}
Let $f(x) = S(x) G'(x) - CG(x)^2.$ We need to show that $f \ge 0.$ As before, we write $S(x) = m(x)/a(x)$, with $m(x) = G(x)a'(x)-a(x)^n/n \ge 0$. Then $f(x) = a^{n-2}(x)m(x) - CG(x)^2.$ Since $a(0) = G(0) = 0,$ it follows that $f(0) = 0.$ Thus, it suffices to show that $f' \ge 0.$ 

\bigskip 

\noindent
We have that $f'(x) = a^{n-3}(x) g(x)$, where 
\begin{equation}
g(x) \equiv  (n-2) a'(x)m(x)+G(x)a(x) (a''(x)-2Ca(x)).
\label{eq:defg}
\end{equation}
Notice that $g(0) = 0$ so, in order to prove that $g(x) \ge 0$,  it suffices to show that $g'(x)\ge 0$. 
Differentiating (\ref{eq:defg}), we can write
\begin{equation}\label{eq:g1}
g'(x) = (2n-3) \,  G \, a' \, a''  + \left( \frac{2}{n}\right)a^n \, a'' - 2\,C\, a^{n+1} - 4 \, C \, G \, a \, a'+G\, a\,a'''. 
\end{equation}
Since by hypothesis $a \ge 0$ and $a' \ge 0,$ it follows from equation (\ref{eq:T}) that 
$$
D \, a \, a' \le (2n-3)\, a' \, a''+a \,a''',
$$
so we can write  
$$ g'(x)  \ge G \, a \, a'  \, (D -4C) + \frac{2a^na''}{n}-2C a^{n+1}.$$
Furthermore, since by hypothesis $a''\ge \omega a,$ it follows that 
$$g'(x) \ge Gaa' (D -4C)+ \frac{a^{n+1}}{n}(2\omega -2Cn).$$ 
However, since $m\ge 0,$ we have that $Ga' \ge a^n/n.$ In particular, if $C \le D/4,$ we have that 
$$g'(x) \ge \frac{a^{n+1}}{n} ( D -4C  +2\omega  -2Cn).$$ 
It follows that $g'\ge 0$ provided that $C \le \dfrac{D+2\omega}{2(n+2)}.$

\bigskip 

\noindent
Now, choosing 
\begin{equation*}
C = \min\left\{ \frac{D+2\omega}{2(n+2)}, \frac{D}{4}\right\},
\end{equation*}
it follows that $S(x) \ge C\dfrac{G^2(x)}{G'(x)}$, which proves the lemma.
 \end{proof}
 

\bigskip
\bigskip
It follows from Lemmas \ref{lem:boundlambda} and \ref{lem:bounds} that if $u$ is a solution of (\ref{eq:BNG}) then 
$$
 \lambda > \frac{n(n-1)}{4} \left(\frac{\displaystyle \int_0^R  S(x) u'(x)^2\, \diff x}{\displaystyle \int_0^R \frac{G(x)^2 u'(x)^2}{G'(x)}\, \diff x} \right) \ge \frac{Cn(n-1)}{4}.
 $$
Hence, we conclude that if 
\begin{equation}\label{eq:L3}
\lambda \le \lambda^*(n,R) \equiv \frac{Cn(n-1)}{4}, 
\end{equation}
then problem (\ref{eq:BNG}) has no  non trivial solution. This concludes the proof of Theorem \ref{thm2}.

\section{An illustrative example}

To compare the bounds $\mu^*(n,R)$ and $\lambda^*(n,R)$ embodied in theorems \ref{thm1} and \ref{thm2} above, it is instructive to work a 
specific example as an application. Consider 
\begin{equation}
a(x) = x e^x.
\label{eq:3.1}
\end{equation}
Then, $a'(x)=e^x (1+x)$, $a''(x)=e^x (2+x)$, and $a'''(x)=e^x (3+x)$. 
Define,
$$
f(x)\equiv (n-1) \frac{a''}{a} +  \frac{a'''}{a'} = \left(1+\frac{2}{x}\right) (n-1) + \frac{3+x}{1+x}.
$$ 
The function $f(x)$ is strictly decreasing, therefore
\begin{equation}
\inf_{0<x<R} f(x) =  f(R) = n + 2 \frac{n(1+R)-1}{R(1+R)}. 
\label{eq:3.2}
\end{equation}
Then, according to (\ref{eq:SA29}), the problem (\ref{eq:BNG}) with $a(x) = x e^x$ does not have a solution if 
\begin{equation}
\lambda \le \mu^*(n,R) \equiv \frac{1}{4} (n-2) \left( n + 2 \frac{n(1+R)-1}{R(1+R)} \right).
\label{eq:3.3}
\end{equation}
On the other hand, define
$$
g(x)\equiv (2n-3) \frac{a''}{a} +  \frac{a'''}{a'} = \left(1+\frac{2}{x}\right) (2n-3) + \frac{3+x}{1+x}.
$$ 
Again, the function $g(x)$ is strictly decreasing, therefore
\begin{equation}
D\equiv \inf_{0<x<R} g(x) =  g(R) =   \left(1+\frac{2}{R}\right) (2n-3) + \frac{3+R}{1+R}.
\label{eq:3.4}
\end{equation}
Also, we have, 
\begin{equation}
\omega  \equiv  \inf_{0<x<R} \frac{a''}{a} = \inf_{0<x<R} \left(1 + \frac{2}{x} \right) = 1 + \frac{2}{R}.
\label{eq:3.5}
\end{equation}
Thus, we have on the one hand, 
$$
D+2 \omega = \left(1+\frac{2}{R}\right) (2n-1) + \frac{3+R}{1+R}.
$$
and, on the other hand, 
$$
\frac{D}{4} =\frac{1}{4}  \left\{\left(1+\frac{2}{R}\right) (2n-3) + \frac{3+R}{1+R}\right\}.
$$
If we denote by $s=(1+R)(2+R)$, after some simple calculations we conclude that, $(D+2\omega)/(2n+4) \le D/4$ provided, 
\begin{equation}
n \ge {\hat n}(s) =\frac{1}{2} \left[\left(1+\frac{1}{s} \right) + \sqrt{\left(1+\frac{1}{s} \right)^2+8}\right]
\label{eq:3.6}
\end{equation}
Notice that ${\hat n}(s)$ is a decreasing function of $s$, such that $\hat n(2) = (3+\sqrt{41})/4 =2.35078\dots$ 
and $\lim_{s \to \infty} {\hat n}(s) = 2$. 

\bigskip
\noindent
Hence, if $n \ge {\hat n}((1+R)(2+R))$, 
\begin{equation}
\lambda^*(n,R)= \frac{n(n-1)}{8(n+2)} \, \left\{ \left(1+\frac{2}{R}\right) (2n-1) + \frac{3+R}{1+R} \right\}.
\label{eq:3.7}
\end{equation}

\bigskip
\noindent
Let us compare our new type of bound, $\lambda^*(n,R)$ with the more standard one $\mu^*(n,R)$ for this example. In order to do that, 
we need to determine
for what values of $n$ and $R$, 
\begin{equation}
\lambda^*(n,R) \ge \mu^*(n,R).
\label{eq:3.8}
\end{equation}
Using (\ref{eq:3.3}) and (\ref{eq:3.7}) one can check that (\ref{eq:3.8})
holds, if and only if,
\begin{equation}
F(n,R) \equiv 2 \,(R^2+3R)(4-n)n + 2 (8-n)(n-1)= 2 \left\{(s-2)(4-n)n +  (8-n)(n-1) \right\} \ge 0.
 \label{eq:3.9}
 \end{equation}
In general this holds for all $n \ge 4$, independently of $R$. As a function of $s=(R+1)(R+2)$, (\ref{eq:3.9}) holds 
provided 
\begin{equation}
n \ge {\tilde n}(s) = \frac{1}{2(s-1)} \left[(4s+1) + \sqrt{16 \, s^2 -24 \, s +33} \,\right]
\label{eq:3.1}
\end{equation} 
It is not hard to show that ${\tilde n}(s)$ is a decreasing function of $s$ for all $s>1$. In fact ${\tilde n}$ 
decreases from $8$, for $R=0$ (i.e., $s=2$) to $4$ when $R\to \infty$ (i.e., $s\to \infty$). 
Moreover, it is a simple excercise, which we leave to the reader, to prove that $D/4 \ge \mu^*(n,R)$, so 
in the case $2 \le n < \hat n(s)$, $\lambda^*(n,R) >  \mu^*(n,R)$, for all $R$. 
Hence, in the $(n,R)$ parameter space,  our bound 
is better provided $2 \le n < {\tilde n} (s)$, where $s=(R+1)(R+2)$.

\bigskip

\begin{remark}
For the radial hyperbolic case, i.e., for problem (\ref{eq:BNG}) with $a(x) = \sinh(x)$, it follows from theorems \ref{thm1} and 
\ref{thm2} that $\mu^*(n,R)= n(n-2)/4$, whereas $\lambda^*(n,R) = n^2(n-1)/4(n+2)$, independent of $R$. In that case 
$\lambda^*(n,R)$ is better than $\mu^*(n,R)$ for all $2 \le n<4$, and all $R>0$. This case was reported by us in \cite{BeBe17}.
\end{remark}

\bigskip

\section{Acknowledgements}

One of us (RB) would like to thank the organizers of the semester program {\it Spectral Methods in Mathematical Physics} for  their hospitality
at the Mittag--Leffler Institute while this manuscript was being completed.

\end{document}